\definecolor{verylight}{gray}{0.97}
\definecolor{light}{gray}{0.9}
\definecolor{medium}{gray}{0.85}
\def\x{{\bold x}}
\def\1{{\mathbf 1}}
\def\0{{\mathbf 0}}
\def\opn#1#2{\def#1{\operatorname{#2}}} 
\opn\relint{relint}
\opn\height{ht}
\opn\tr{tr}
\opn\ini{in}
\opn\Spec{Spec} \opn\Supp{Supp} \opn\supp{supp} \opn\Sing{Sing}
\opn\Ass{Ass} \opn\Min{Min}\opn\Mon{Mon} \opn\dstab{dstab} \opn\astab{astab}
\opn\Syz{Syz} \opn\depth{depth}
\newtheorem{Theorem}{Theorem}[section]
\newtheorem{Lemma}[Theorem]{Lemma}
\newtheorem{Corollary}[Theorem]{Corollary}
\newtheorem{Proposition}[Theorem]{Proposition}
\newtheorem{Remark}[Theorem]{Remark}
\newtheorem{Example}[Theorem]{Example}
\newtheorem{Definition}[Theorem]{Definition}
\newtheorem{Question}[Theorem]{Question}
\begin{document}

\title{Strong persistence and associated primes of powers of monomial ideals}

\author{Amir Mafi and Hero Saremi}

\address{Amir Mafi, Department of Mathematics, University Of Kurdistan, P.O. Box: 416, Sanandaj, Iran.}
\email{A\_Mafi@ipm.ir}

\address{Hero Saremi, Department of Mathematics, Sanandaj Branch, Islamic Azad University, Sanandaj, Iran.}
\email{hero.saremi@gmail.com}

\begin{abstract}
Let $R=K[x_1,\ldots, x_n]$ be the polynomial ring in $n$ variables over a field $K$ and $I$ be a monomial ideal of degree $d\leq 2$.
We show that $(I^{k+1}:I)=I^k$ for all $k\geq 1$ and we disprove a motivation question that was asked by of Carlini, H\`a, Harbourne and Van Tuyl by providing of a counterexample. Also, by this counterexample, we give a negative answer to the question that the depth function of square-free monomial ideals are non-increasing.
\end{abstract}

\subjclass[2010]{Primary 13A15; Secondary 13A30, 13C15}
\keywords{Associated primes, depth, monomial ideals, powers of ideals}

\maketitle

\section{Introduction}
Throughout this paper, we assume that $R=K[x_1,\ldots,x_n]$ is the polynomial ring in $n$ variables over the field $K$ with the maximal ideal $\frak{m}=\langle x_1,\ldots,x_n\rangle$ and $I$ is a monomial ideal of $R$. Let $\Ass(I)$ be the set of associated primes of $R/I$. Brodmann \cite{B} showed that there exists an integer $k_0$ such that $\Ass(I^k)=\Ass(I^{k_0})$ for all $k\geq k_0$. This stable set of associated primes is denoted by $\Ass^{\infty}(I)$. Recently there have been several publications in \cite{ANR, BHR, BFT, CHHV, HM, HRV, KM, MN} about the set of associated primes of monomial ideals. An ideal $I$ is said to satisfy the {\it strong persistence property} if $(I^{k+1}:I)=I^k$ for all $k\geq 1$. Ratliff \cite{R} proved that $(I^{k+1}:I)=I^k$ for all large $k$.
There are some interesting classes of monomial ideals that satisfy the strong persistence property. These include normal ideals, edge ideals of graphs, vertex cover ideals of perfect graphs (or chordal graphs), polymatroidal ideals, vertex cover ideals of cycle graphs of odd orders, vertex cover ideals of wheel graphs of even orders and all square-free monomial ideals in $R$ with $n\leq 4$, see for details \cite{FHV1, FHV2, HRV, MMV, NKA,R, RT, V}.

An ideal $I$ is said to satisfy the {\it persistence property} if $\Ass(I^k)\subseteq\Ass(I^{k+1})$ for all $k\geq 1$. It is known that
the strong persistence property yields the persistence property, see \cite{HQ}.
Brodmann \cite{B1} showed that there exists an integer $k_0$ such that $\depth R/I^k=\depth R/I^{k_0}$ for all $k\geq k_0$. Herzog and Hibi \cite{HH1} proved that if $I$ is a graded ideal such that $I^k$ has a linear resolution for all $k\geq 1$, then $\depth R/I^k$ is a non-increasing function for all $k\geq 1$.
The following questions were raised in \cite{HH1, HRV, MV}:

\begin{Question}\label{Q1}

\par {\rm (i)}  Do all square-free monomial ideals satisfy the strong persistence property?
\par {\rm (ii)} Do all square-free monomial ideals satisfy the persistence property?
\par {\rm (iii)} Do all square-free monomial ideals have non-increasing depth function?
\par \noindent
\end{Question}

A remarkable example of a vertex cover ideal $I$ was given in \cite{KSS} with the property that $(I^4:I)\neq I^3$, $\Ass(I^3)\not\subseteq\Ass(I^4)$ and $\depth R/I^3\not\geq\depth R/I^4$. This gives a negative answer to the above questions.
This counterexample has the property that $\Ass(I^2)\subseteq\Ass(I^n)$ for all $n\geq 2$. After that Carlini, H\`a, Harbourne and Van Tuyl in \cite[Question 2.51]{CHHV} asked the following question:

\begin{Question}\label{Q2}
Let $I$ be any square-free monomial ideal. Does $\Ass(I^2)\subseteq\Ass(I^k)$ for all $k\geq 2$?
\end{Question}

In this paper, we disprove Question \ref{Q2}, in general, by providing a counterexample, and also we give some positive answers in some special cases. Furthermore, we show that the strong persistence property holds for all monomial ideals of degree $d\leq 2$. This is a generalization of \cite[Theorem 2.15]{MMV}.
For unexplained notation or terminology, we refer the reader to \cite{HH}.

\section{The results}

We start this section with the following definition:
\begin{Definition}
Let $I$ be an ideal of $R$. Then an element $x\in R$ is {\bf integral over} $I$, if there is an equation
\[ {x}^k+a_1{x}^{k-1}+\dots+a_{k-1}{x}+a_k=0,\]
with $a_i\in I^i$. The set of elements $\overline{I}$ in $R$ which are integral over $I$ is the integral closure of $I$. It is clear
 $I\subseteq \overline{I}\subseteq \sqrt{I}$. The ideal $I$ is integrally closed if $I=\overline{I}$, and $I$ is normal if all powers of $I$ are integrally closed.\\ It is clear that if $I$ is a square-free monomial ideal, then $I=\sqrt{I}$ and so $I$ is integrally closed.
\end{Definition}

The following result is known but for readers we give an easy proof:

\begin{Lemma}\label{L1}
Let $I$ be a monomial ideal of $R$. If $(I^{k+1}:I)=I^k$ for some $k\geq 1$, then $\Ass(I^k)\subseteq\Ass(I^{k+1})$.
In particular, if $I$ satisfies the strong persistence property, then $I$ satisfies the persistence property.
\end{Lemma}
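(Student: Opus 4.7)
The plan is to localize at $P$, which turns $P$ into the unique maximal ideal of the localized ring and reduces the statement to a one-line argument. I would invoke two standard facts: (i) for a prime $P$, $P \in \Ass(R/J)$ if and only if $PR_P \in \Ass_{R_P}(R_P/JR_P)$; and (ii) for finitely generated ideals, the colon operation commutes with localization, so the hypothesis $(I^{k+1}:I) = I^k$ transfers verbatim to $R_P$ as $(I^{k+1}R_P : IR_P) = I^k R_P$. Together these let me replace the ambient ring $R$ by the local ring $(R_P, PR_P)$ throughout.

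Working inside $R_P$: I pick $u \in R_P$ witnessing $PR_P = (I^k R_P : u)$. Since $u \notin I^k R_P = (I^{k+1}R_P : IR_P)$, there is some $f \in IR_P$ with $v := uf \notin I^{k+1}R_P$. A direct computation gives
\[
PR_P \cdot v = (PR_P \cdot u)\,f \subseteq I^k R_P \cdot f \subseteq I^{k+1}R_P,
\]
so $PR_P \subseteq (I^{k+1}R_P : v)$. Because $v \notin I^{k+1}R_P$, the colon $(I^{k+1}R_P : v)$ is a proper ideal of the local ring $R_P$, hence is contained in its unique maximal ideal $PR_P$. Therefore $(I^{k+1}R_P : v) = PR_P$, which is precisely the statement that $PR_P \in \Ass_{R_P}(R_P/I^{k+1}R_P)$. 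Delocalizing via (i) yields $P \in \Ass(I^{k+1})$.

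There is essentially no obstacle here; the point of the whole proof is the trivial fact that in a local ring, any proper ideal containing the maximal ideal must equal it. The only bookkeeping step worth verifying explicitly is that the hypothesis really does descend to $R_P$, and this is immediate from (ii) since both $I$ and $I^{k+1}$ are finitely generated. The ``in particular'' clause then follows automatically: if $(I^{k+1}:I)=I^k$ holds for every $k \ge 1$, applying the containment at each $k$ produces the ascending chain $\Ass(I) \subseteq \Ass(I^2) \subseteq \cdots$, which is exactly the persistence property.
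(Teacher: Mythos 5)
Your proof is correct, but it reaches the conclusion by a different mechanism than the paper. Both arguments share the same skeleton: starting from a witness $u$ with $\frak{p}=(I^k:u)$ and using the hypothesis $I^k=(I^{k+1}:I)$ to produce an element of the form $uf$ with $f\in I$ witnessing $\frak{p}\in\Ass(I^{k+1})$. The difference lies in how one justifies that a single such $f$ exists with $(I^{k+1}:uf)$ exactly equal to $\frak{p}$. The paper stays in the polynomial ring and leans on the monomial structure: associated primes of monomial ideals admit monomial witnesses $\alpha$, so $\frak{p}=(I^{k+1}:\alpha I)$ is a finite intersection of the monomial ideals $(I^{k+1}:\alpha u)$ over the monomial generators $u$ of $I$, and a prime equal to a finite intersection of ideals must coincide with one of them. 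You instead localize at $\frak{p}$ first, so that $\frak{p}R_{\frak{p}}$ becomes the unique maximal ideal; then any proper colon ideal containing it is forced to equal it, and no appeal to monomials is needed. Your route buys generality -- it proves the implication for arbitrary ideals in a Noetherian ring, which is essentially the Herzog--Qureshi argument the paper cites for the ``in particular'' clause -- at the cost of importing the two localization facts (i) and (ii), both of which are standard and correctly invoked. The paper's route is more elementary in the sense of staying inside $R$ and using only combinatorial properties of monomial ideals. Both treatments of the ``in particular'' clause are identical.
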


\begin{proof}
Suppose $\frak{p}\in\Ass(I^k)=\Ass(I^{k+1}:I)$. Then $\frak{p}=((I^{k+1}:I):\alpha)$ for some monomial element $\alpha\in R$ and so  $\frak{p}=(I^{k+1}:{\alpha}I)$. Thus there is a monomial $u\in I$ such that $\frak{p}=(I^{k+1}:{\alpha}u)$ and hence $\frak{p}\in\Ass(I^{k+1})$.
Therefore $\Ass(I^k)\subseteq\Ass(I^{k+1})$. For the remaining proof, by hypothesis we have $(I^{k+1}:I)=I^k$ for all $k\geq 1$ and this implies $\Ass(I^k)\subseteq\Ass(I^{k+1})$ for all $k\geq 1$, as required.
\end{proof}

\begin{Lemma}(\cite[Lemma 11.27]{M})\label{L0}
Let $I$ be an ideal of $R$. Then, for all $n\geq m\geq 1$, $(\overline{I^n}:\overline{I^m})=(\overline{I^n}:I^m)=\overline{I^{n-m}}$.
\end{Lemma}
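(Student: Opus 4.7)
The statement decomposes into three inclusions:
$$\overline{I^{n-m}} \subseteq (\overline{I^n} : \overline{I^m}) \subseteq (\overline{I^n} : I^m) \subseteq \overline{I^{n-m}},$$
so my plan is to dispatch the first two as essentially formal and put the weight of the proof on the third.

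First, I would use the standard fact that the product of integral closures is contained in the integral closure of the product: $\overline{I^{n-m}} \cdot \overline{I^m} \subseteq \overline{I^{n-m} \cdot I^m} = \overline{I^n}$. This follows directly by combining integral dependence equations for elements of the two closures. Hence any element of $\overline{I^{n-m}}$ times any element of $\overline{I^m}$ lies in $\overline{I^n}$, giving the first colon containment. The second is immediate from $I^m \subseteq \overline{I^m}$, since passing to a smaller ideal on the right enlarges the colon.

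For the substantive inclusion $(\overline{I^n} : I^m) \subseteq \overline{I^{n-m}}$, the plan is to invoke the valuative characterization of integral closures. In a Noetherian ring, there is a finite family of discrete valuations $v_1, \ldots, v_s$ (the Rees valuations of $I$) such that
$$\overline{I^k} \; = \; \{x \in R : v_i(x) \geq k \cdot v_i(I) \text{ for } i = 1, \ldots, s\}$$
for every $k \geq 1$, where $v_i(I) := \min_{a \in I} v_i(a)$. Given $x$ with $xI^m \subseteq \overline{I^n}$, I fix such a valuation $v = v_i$ and choose $a \in I^m$ realizing $v(a) = m \cdot v(I)$; such an $a$ exists because the minimum over $I^m$ is attained and equals $m \cdot v(I)$. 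Then $v(xa) \geq n \cdot v(I)$ forces $v(x) \geq (n-m) v(I)$. Running this argument at every Rees valuation and reassembling via the valuative criterion yields $x \in \overline{I^{n-m}}$.

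The main obstacle is the valuative criterion itself, which rests on the theory of Rees valuations and, in the non-domain case, on passing to the normalizations of the quotients by minimal primes. An alternative route that is perfectly adequate for this paper's applications, where $I$ is a monomial ideal, is to replace the valuative criterion by the Newton polyhedron description: a monomial $x^c$ lies in $\overline{I^k}$ precisely when $c \in k \cdot \mathrm{NP}(I)$. Then $xI^m \subseteq \overline{I^n}$ translates into a system of affine inequalities on the exponent vector of $x$ coming from the facets of $\mathrm{NP}(I)$, and the same \emph{minimum attained} argument applied facet by facet produces $x \in \overline{I^{n-m}}$.
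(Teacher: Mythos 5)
The paper gives no proof of this lemma at all: it is quoted from McAdam's book \cite{M}, whose surrounding chapter develops Rees valuations, so your argument is essentially a reconstruction of the proof in the cited source rather than an alternative to anything written in the paper. Your decomposition into the three inclusions is the right one; the first two are indeed formal (via $\overline{I^{n-m}}\,\overline{I^m}\subseteq\overline{I^{n-m}I^m}=\overline{I^n}$ and $I^m\subseteq\overline{I^m}$), and the valuative step is sound: for each Rees valuation $v$ one has $v(I^m)=m\,v(I)$ with the minimum attained (e.g.\ at $a_0^m$ where $v(a_0)=v(I)$), so $v(x)+m\,v(I)=v(xa)\geq n\,v(I)$ forces $v(x)\geq (n-m)\,v(I)$, which is exactly membership in $\overline{I^{n-m}}$ because the same finite family of valuations computes $\overline{I^k}$ for every $k\geq 0$. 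Two small points. First, the statement as transcribed needs a nondegeneracy hypothesis: for $I=(0)$ in a domain and $n>m$ one gets $(\overline{I^n}:I^m)=(0:0)=R\neq \overline{I^{n-m}}=0$, and Rees valuations do not exist for such $I$; McAdam's actual hypotheses exclude this case, and it is immaterial here since the paper only applies the lemma to nonzero monomial ideals. Second, your Newton polyhedron variant is a good elementary substitute in the monomial setting and covers everything the paper needs, but it requires the (easy) preliminary observation that $(\overline{I^n}:I^m)$ is itself a monomial ideal, so that it suffices to test monomials $x^c$ against the facet inequalities.
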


\begin{Remark}
Let $I$ be a square-free monomial ideal of $R$. Then, for all $k\geq 1$, \[I\subseteq(I^2:I)\subseteq(I^k:I^{k-1})\subseteq(\overline{I^k}:I^{k-1})=\overline{I}=I\] the first equality follows by Lemma \ref{L0}. Thus $(I^k:I^{k-1})=I$ and so by the same argument of Lemma \ref{L1} we have $\Ass(I)\subseteq\Ass(I^{k})$ for all $k\geq 1$.
\end{Remark}

\begin{Definition}
For a square-free monomial ideal $I=\langle\{x_{i1}\dots x_{in_i}\mid i=1,\dots,t\}\rangle$ of $R$ the {\bf Alexander dual} of $I$, denoted by $I^{\vee}$, is is the square-free monomial ideal $I^{\vee}=\cap_{i=1}^t\langle x_{i1},\dots, x_{in_i}\rangle$.
\end{Definition}

Following \cite{FHV} let $G$ be a finite simple graph on the vertex set $V=\{x_1,\ldots,x_n\}$ with edge set $E$. By identifying the vertices with the variables in the polynomial ring $R=K[x_1,\ldots, x_n]$, one can associate to $G$ a square-free quadratic monomial ideal $I(G)=\langle x_ix_j|~\{x_i,x_j\}\in E\rangle.$
The ideal $I(G)$ is called the {\it edge ideal} of $G$ which was first introduced by Villarreal \cite{V1}. The ideal $I(G)^{\vee}$ is referred to as the {\it cover ideal} because of the well-known fact that the generators of $I(G)^{\vee}$ correspond to vertex covers (see \cite{HHT}).

\begin{Theorem}(\cite[Theorem 3.2]{FHV})\label{T1}
Let $G$ be a finite simple graph and $J=I(G)^{\vee}$. Then the irredundant irreducible decomposition of $J^2$ is
\[J^2=\bigcap_{\{x_i,x_j\}\in E}\langle x_i,x_j\rangle^2\cap \bigcap_{\{x_{i_1},\ldots,x_{i_s}\} ~is ~an~ induced~ odd~ cycle}\langle x_{i_1}^2,\ldots,x_{i_s}^2\rangle. \]
\end{Theorem}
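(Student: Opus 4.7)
The plan is to translate everything into combinatorial conditions on exponent vectors and check both inclusions. Since $J = \bigcap_{\{x_i,x_j\}\in E}\langle x_i,x_j\rangle$, the monomial $x^a = \prod x_i^{a_i}$ lies in $J$ exactly when $\supp(a)$ is a vertex cover of $G$, and so $x^a \in J^2$ if and only if $a = b + c$ with $b,c \in \NN^n$ and both $\supp(b),\supp(c)$ vertex covers. On the right-hand side, $x^a \in \langle x_i,x_j\rangle^2$ iff $a_i + a_j \geq 2$, and $x^a \in \langle x_{i_1}^2,\ldots,x_{i_s}^2\rangle$ iff $a_{i_k}\geq 2$ for some $k$. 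With these reformulations the theorem becomes a purely combinatorial statement.

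The containment $J^2 \subseteq \bigcap\langle x_i,x_j\rangle^2 \cap \bigcap\langle x_{i_1}^2,\ldots,x_{i_s}^2\rangle$ is the easy direction. The edge part follows from $J \subseteq \langle x_i,x_j\rangle$. For an induced odd cycle $C$ of length $s$, every edge of $C$ is an edge of $G$, so the restriction to $C$ of the support of any $u \in J$ is a vertex cover of $C$ and has at least $(s+1)/2$ elements; hence the total exponent of $uv$ on $C$ is at least $s+1$ and pigeonhole forces some $a_{i_k}\geq 2$.

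For the reverse inclusion, let $x^a$ lie in every component on the right. Partition the vertices as $S = \{v : a_v = 0\}$, $T = \{v : a_v = 1\}$, $U = \{v : a_v \geq 2\}$, and build a split $a = b + c$ by setting $b_v = 1$, $c_v = a_v - 1$ for $v \in U$, and assigning each $v \in T$ its unit mass to either $b$ or $c$. The edge condition $a_v + a_w \geq 2$ forbids edges with both endpoints in $S$ and edges between $S$ and $T$, so every edge meeting $S\cup U$ is already covered by both $b$ and $c$; what remains is that the $b/c$ assignment on $T$ be a proper $2$-coloring of the induced subgraph $G[T]$. If $G[T]$ were not bipartite, its shortest odd cycle would be induced in $G[T]$ and hence induced in $G$ (since $G[T]$ is an induced subgraph), and it would lie entirely in $T$, contradicting the hypothesis that some vertex on each induced odd cycle has exponent at least $2$. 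Thus $G[T]$ is bipartite, the required $2$-coloring exists, and $x^a \in J^2$.

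For irredundancy I exhibit witnesses. For each edge $\{x_i,x_j\}$, the monomial $x_i\prod_{k\neq i,j}x_k^2$ satisfies every other edge and every cycle condition but fails $a_i + a_j \geq 2$. For each induced odd cycle $C$, the monomial $\prod_{v\in C}x_v\cdot\prod_{w\notin C}x_w^2$ has all exponents on $C$ equal to $1$ and so lies outside $\langle x_{i_1}^2,\ldots,x_{i_s}^2\rangle$; it satisfies every edge condition; and for any other induced odd cycle $C'$, since $C$ is induced the only cycle of $G[C]$ is $C$ itself, so $C'\not\subseteq C$ and $C'$ contains a vertex at exponent $2$. The delicate step throughout is the reverse inclusion, where the induced-odd-cycle hypothesis is precisely what forces $G[T]$ to be bipartite, and recognizing this equivalence is the crux of the argument.
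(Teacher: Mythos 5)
Your proof is correct and complete. Note first that the paper itself offers no argument for this statement: it is imported verbatim from \cite{FHV}, so there is no in-paper proof to compare against. Judged on its own, your argument holds up. The translation of membership in $J^2$ into the existence of a splitting $a=b+c$ with both supports vertex covers is correct; the forward inclusion via the bound $|W\cap C|\geq (s+1)/2$ for any vertex cover $W$ and any induced odd cycle $C$ on $s$ vertices, followed by pigeonhole, is correct; and the reverse inclusion correctly isolates the only nontrivial obstruction, namely proper $2$-colorability of $G[T]$ (edges meeting $S$ are excluded or forced into $U$, and $U$ lies in both supports), and disposes of it using the standard fact that a shortest odd cycle is chordless, hence induced in $G[T]$ and therefore in $G$, which would contradict the cycle conditions since such a cycle would lie entirely in $T$. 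The irredundancy witnesses also check out, including the observation that an induced odd cycle contains no other induced odd cycle. This is essentially the combinatorial mechanism behind the original proof in \cite{FHV}, where it is phrased in terms of colorings attached to powers of cover ideals; you have in effect reconstructed that argument in a self-contained way. One cosmetic caveat, which is a defect of the statement as transcribed rather than of your proof: $\langle x_i,x_j\rangle^2=\langle x_i^2,x_j\rangle\cap\langle x_i,x_j^2\rangle$ is $\langle x_i,x_j\rangle$-primary but not irreducible as a monomial ideal, so what you (correctly) establish is an irredundant primary decomposition; the word ``irreducible'' should not be taken literally.
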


Simis, Vasconcelos and Villarreal in \cite[Proposition 6.6]{SVV} proved that if $I$ is an edge ideal of a finite simple graph, then $I^2$ is integrally closed. In the following proposition, we prove a similar result for the vertex cover ideal of an edge ideal.

\begin{Proposition}\label{P1}
Let $I$ be an edge ideal of a finite simple graph $G$  and $J$ be a vertex cover ideal of $I$. Then $\overline{J^2}=J^2$. In particular, $(J^3:J)=J^2$.
\end{Proposition}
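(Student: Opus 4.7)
The strategy is to use Theorem \ref{T1} together with a simple polyhedral inequality coming from vertex covers of induced odd cycles. Since $J^2\subseteq\overline{J^2}$ is automatic, I would focus on the reverse inclusion and take a monomial $u = x^{a}$ in $\overline{J^2}$; the goal is to show that $u$ lies in every component of the irredundant decomposition
\[ J^2 \;=\; \bigcap_{\{x_i,x_j\}\in E}\langle x_i, x_j\rangle^2 \;\cap\; \bigcap_{C\text{ induced odd cycle}}\langle x_{i}^2 : i\in C\rangle. \]

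For the edge components, each $\langle x_i, x_j\rangle^2$ is the square of a monomial prime ideal generated by variables, and hence integrally closed; thus $\overline{J^2}\subseteq\bigcap_{\{x_i,x_j\}\in E}\langle x_i,x_j\rangle^2$, which immediately yields the inequalities $a_i+a_j\geq 2$ for every edge $\{x_i,x_j\}\in E$. For an induced odd cycle $C=\{x_{i_1},\ldots,x_{i_s}\}$ of length $s$, I would show $u\in\langle x_i^2:i\in C\rangle$ by contradiction. Assume $a_{i_k}\leq 1$ for every $k$; applied to the cyclic edges $\{x_{i_k},x_{i_{k+1}}\}$, the previous inequalities force $a_{i_k}=1$ for every $k$, so $\sum_{i\in C}a_i=s$. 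On the other hand, $u\in\overline{J^2}$ gives some $N\geq 1$ with $u^N\in J^{2N}$; writing $u^N$ as a multiple of a product of $2N$ minimal generators of $J$ produces vertex covers $W_1,\ldots,W_{2N}$ of $G$ (corresponding to those generators) satisfying $Na_i\geq\sum_{k=1}^{2N}(\chi_{W_k})_i$ for every $i$. Because $C$ is induced, each $W_k\cap C$ is a vertex cover of the odd cycle $C$, so $|W_k\cap C|\geq\lceil s/2\rceil=(s+1)/2$. Summing the displayed inequalities over $i\in C$ then gives
\[ Ns \;=\; N\sum_{i\in C}a_i \;\geq\; \sum_{k=1}^{2N}|W_k\cap C| \;\geq\; 2N\cdot\frac{s+1}{2} \;=\; N(s+1), \]
a contradiction. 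Hence $u$ lies in every odd-cycle component as well, and therefore $u\in J^2$, proving $\overline{J^2}=J^2$.

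For the ``in particular'' statement, I would apply Lemma \ref{L0} with $n=3$, $m=1$, $I=J$, which gives $(\overline{J^3}:J)=\overline{J^2}=J^2$; since $J^3\subseteq\overline{J^3}$ this yields $(J^3:J)\subseteq (\overline{J^3}:J)=J^2$, and the reverse containment $J^2\subseteq(J^3:J)$ is immediate from $J\cdot J^2\subseteq J^3$. The main obstacle is the odd-cycle component, because $\langle x_i^2:i\in C\rangle$ itself is \emph{not} integrally closed (for instance $x_{i_1}x_{i_2}$ already lies in its integral closure), so a naive component-by-component argument fails. The point of the proof above is to bypass this by invoking the global integrality relation $u^N\in J^{2N}$ together with the combinatorial lower bound $|W_k\cap C|\geq (s+1)/2$, which is exactly where the oddness of $s$ is used.
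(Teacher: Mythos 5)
Your proof is correct, but it takes a genuinely different route from the paper. The paper also starts from Theorem \ref{T1}, but then argues via associated primes: since $\Ass(\overline{J^2}/J^2)\subseteq\Ass(R/J^2)$, it rules out the height-two edge primes directly and, for an odd-cycle prime $\frak{p}$, localizes at $\frak{p}$ so that $J_{\frak{p}}$ becomes the cover ideal of an odd cycle, and then invokes the external result \cite[Theorem 1.10]{ANR} (normality of cover ideals of odd cycles) to get $\overline{J^2_{\frak{p}}}=J^2_{\frak{p}}$, a contradiction. You instead verify membership of a monomial $u\in\overline{J^2}$ in each component of the decomposition by hand: the edge components because powers of primes generated by variables are integrally closed, and the odd-cycle components by the counting argument $Ns\geq N(s+1)$ built from the relation $u^N\in J^{2N}$ and the bound $|W_k\cap C|\geq (s+1)/2$ for vertex covers of an odd cycle. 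Your version is self-contained (modulo the standard fact that a monomial $u$ lies in $\overline{I}$ iff $u^N\in I^N$ for some $N$, which you should cite, e.g.\ \cite[Theorem 1.4.2]{HH}) and nicely isolates where oddness of the cycle enters; the paper's version is shorter but outsources the essential content to \cite{ANR}. Two small remarks: your phrase ``Because $C$ is induced'' is not what makes $W_k\cap C$ a vertex cover of $C$ --- that holds for any cycle whose edges lie in $E$, since each such edge must be covered by $W_k$; inducedness is only needed for the irredundant decomposition in Theorem \ref{T1} itself. The handling of the ``in particular'' statement via Lemma \ref{L0} coincides with the paper's.
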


\begin{proof}
It is enough to prove that $\Ass(\overline{J^2}/J^2)=\emptyset$. By Theorem \ref{T1}, $\Ass(R/J^2)=\{(x_i,x_j)\mid~ \{x_i,x_j\}\in E\}\cup\{(x_{i_1},\ldots,x_{i_s})\mid~$ $\{x_{i_1},\ldots,x_{i_s}\}$ is an induced odd cycle of $G$ $\}$. Since $\Ass(\overline{J^2}/J^2)\subseteq\Ass(R/J^2)$, it readily follows $(x_i,x_j)\not\in\Ass(\overline{J^2}/J^2)$ and so we may assume $\frak{p}=(x_{i_1},\ldots,x_{i_s})\in\Ass(\overline{J^2}/J^2)$.
Therefore $\overline{J^2_{\frak{p}}}/J^2_{\frak{p}}\neq 0$ and we may assume $J_{\frak{p}}$ is a vertex cover ideal of an odd cycle graph. By using
\cite[Theorem 1.10]{ANR} we have $\overline{J^2_{\frak{p}}}=J^2_{\frak{p}}$ and this is a contradiction. Thus $\Ass(\overline{J^2}/J^2)=\emptyset$ and so
$\overline{J^2}=J^2$. For the remaining of the proof, we have $J^2\subseteq(J^3:J)\subseteq(\overline{J^3}:J)=\overline{J^2}$ the last equality follows by Lemma \ref{L0}. Since $\overline{J^2}=J^2$, we have the result. This completes the proof.
\end{proof}

The following corollary gives a partially positive answer to Question \ref{Q2}.

\begin{Corollary}\label{C1}
Let $I$ be a square-free monomial ideal of $R$ such that $\overline{I^2}=I^2$. Then $\Ass(I^2)\subseteq\Ass(I^k)$ for all $k\geq 2$. In particular,
if $I$ is a vertex cover ideal of a finite simple graph then $\Ass(I^2)\subseteq\Ass(I^k)$ for all $k\geq 2$.
\end{Corollary}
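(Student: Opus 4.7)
The plan is to bootstrap Proposition~\ref{P1} to every power at once. That proposition extracts $(J^3:J)=J^2$ from $\overline{J^2}=J^2$, which via Lemma~\ref{L1} only yields the single step $\Ass(I^2)\subseteq\Ass(I^3)$. To reach all $k\geq 2$ uniformly, I would prove the stronger colon identity $(I^k:I^{k-2})=I^2$ and then run the argument of Lemma~\ref{L1} with $I$ replaced by $I^{k-2}$.

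The colon identity is the easy half. By Lemma~\ref{L0}, $(\overline{I^k}:I^{k-2})=\overline{I^2}$, and the hypothesis gives $\overline{I^2}=I^2$; combined with the trivial inclusion $I^2\subseteq(I^k:I^{k-2})$, the chain
\[I^2\;\subseteq\;(I^k:I^{k-2})\;\subseteq\;(\overline{I^k}:I^{k-2})\;=\;\overline{I^2}\;=\;I^2\]
pins down $(I^k:I^{k-2})=I^2$. Now fix $\frak{p}\in\Ass(I^2)=\Ass((I^k:I^{k-2}))$. Then $\frak{p}=((I^k:I^{k-2}):\alpha)=(I^k:\alpha I^{k-2})$ for some monomial $\alpha$, and writing $I^{k-2}=\langle u_1,\dots,u_t\rangle$ turns this into $\frak{p}=\bigcap_{j=1}^t(I^k:\alpha u_j)$. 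Each $(I^k:\alpha u_j)$ is a monomial ideal containing $\frak{p}$, and the prime-monomial selection step used implicitly in Lemma~\ref{L1} produces an index $j$ with $(I^k:\alpha u_j)=\frak{p}$, so $\frak{p}\in\Ass(I^k)$.

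The only substantive point is this last selection, which I would justify as follows: if $\frak{p}$ were a finite intersection of monomial ideals all strictly containing it, then each summand would contain a monomial generator using only variables outside $\frak{p}$, and the lcm of these witnesses would lie in every summand---hence in $\frak{p}$---while still using only variables outside $\frak{p}$, contradicting primality. The ``In particular'' clause then costs nothing: Proposition~\ref{P1} already supplies $\overline{J^2}=J^2$ for every vertex cover ideal $J$ of a finite simple graph, so the first assertion applies verbatim.
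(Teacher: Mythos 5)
Your proof is correct and follows essentially the same route as the paper: both establish $(I^k:I^{k-2})=I^2$ via the chain $I^2\subseteq(I^k:I^{k-2})\subseteq(\overline{I^k}:I^{k-2})=\overline{I^2}=I^2$ using Lemma~\ref{L0}, and then rerun the argument of Lemma~\ref{L1}, with Proposition~\ref{P1} supplying $\overline{J^2}=J^2$ for the vertex cover case. The only difference is that you make explicit the prime-selection step that the paper leaves implicit in Lemma~\ref{L1}, which is a welcome but not substantive addition.
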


\begin{proof}
By our hypothesis and Lemma \ref{L0}, we have $I^2\subseteq(I^k:I^{k-2})\subseteq(\overline{I^k}:I^{k-2})=I^2$ for all $k\geq 2$.
Therefore $(I^k:I^{k-2})=I^2$. Now, by using the same argument as Lemma \ref{L1} we have $\Ass(I^2)\subseteq\Ass(I^k)$ for all $k\geq 2$.
In particular, if $I$ is a vertex cover ideal of a finite simple graph, then by Proposition \ref{P1} we have the result, as required.
\end{proof}

By the following example we show that Question \ref{Q2} and also Question \ref{Q1} in general have negative answers:

\begin{Example}
Let $R=K[x_1,\ldots,x_7]$ and $I=
\langle x_1x_4x_5x_7,x_2x_3x_6,x_2x_3x_7,x_2x_4x_5,\\x_2x_4x_7,x_2x_5x_6,x_3x_4x_5,x_3x_4x_6,x_3x_5x_7,x_4x_6x_7,x_5x_6x_7\rangle$
be the square-free monomial ideal of $R$. By using Macaulay2 we have $\frak{m}\in\Ass(I^2)\setminus\Ass(I^3)$ and by the algorithm \cite{BHR} it follows $\frak{m}\notin\Ass^{\infty}(I)$. Therefore $\Ass(I^2)\not\subseteq\Ass(I^3)$, $(I^3:I)\neq I^2$, $\depth(R/I^3)\not\leq\depth(R/I^2)$ and $\Ass^{\infty}(I)\neq\bigcup_{k=1}^{\infty}\Ass(I^k)$.

\end{Example}

For the next result we introduce the following terminology: let $I=\langle u_1,\ldots,u_t\rangle$ be  a monomial ideal of $R$. Then we set $\supp(I)=\bigcup_{i=1}^t\supp(u_i)$ and $\deg(I)=\max\{\deg(u_i)\mid 1\leq i\leq t\}$, where $\supp(u)=\{x_i|~ u=x_1^{a_1}\ldots x_n^{a_n}, a_i\neq 0\}$
and $\deg(x_1^{a_1}\ldots x_s^{a_s})=\sum_{i=1}^s a_i$.

\begin{Lemma}\label{L2}
Let $\underline{x}=\langle x_1,\ldots,x_t\rangle$ and $J$ be a monomial ideal of $R$ such that $x_i\not\in\supp(J)$ for all $1\leq i\leq t$.
Then $(\underline{x}^{k+1}J:x_i)=\underline{x}^kJ$ for all $k\geq 1$.
\end{Lemma}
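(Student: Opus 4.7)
My plan is to verify the equality on the level of monomials, since both sides are monomial ideals. The inclusion $\underline{x}^k J \subseteq (\underline{x}^{k+1}J : x_i)$ is immediate, because $x_i \in \underline{x}$ gives $x_i \cdot \underline{x}^k J \subseteq \underline{x}^{k+1} J$. All the work is in the reverse inclusion.

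For the reverse inclusion, I would take a monomial $m$ with $x_i m \in \underline{x}^{k+1} J$. The ideal $\underline{x}^{k+1}J$ is monomially generated by the products $\mu \cdot u$, where $\mu = x_{j_1} x_{j_2} \cdots x_{j_{k+1}}$ with each $x_{j_\ell} \in \{x_1, \ldots, x_t\}$ (repetition allowed) is a generator of $\underline{x}^{k+1}$, and $u$ is a monomial generator of $J$. So there exist such $\mu$ and $u$ with $\mu u \mid x_i m$. The crucial use of the hypothesis is that $x_i \notin \supp(J)$ forces $x_i \nmid u$, so the factor $x_i$ appearing on the right-hand side can only originate from $\mu$.

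I would then split into two cases depending on whether $x_i$ appears among the $x_{j_\ell}$. If some $x_{j_\ell} = x_i$, say $x_{j_{k+1}} = x_i$, then writing $\mu = x_i \mu'$ with $\mu' = x_{j_1}\cdots x_{j_k} \in \underline{x}^k$ and cancelling one copy of $x_i$ from $\mu u \mid x_i m$ yields $\mu' u \mid m$, so $m \in \underline{x}^k J$. If on the other hand no $x_{j_\ell}$ equals $x_i$, then $\mu u$ is coprime to $x_i$, and from $\mu u \mid x_i m$ we get $\mu u \mid m$, placing $m$ in $\underline{x}^{k+1} J \subseteq \underline{x}^k J$. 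Either way $m \in \underline{x}^k J$, which finishes the argument.

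There is no real obstacle here: the only subtlety is the bookkeeping of the $x_i$-exponent in the factorisation $\mu u$, which is handled cleanly by the case split enabled by the assumption $x_i \notin \supp(J)$. A result of this type is exactly what one needs in order to chip off a variable from a colon computation and reduce to lower powers, and I expect it to be used in the sequel to establish the strong persistence property for monomial ideals of degree at most two.
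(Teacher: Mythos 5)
Your proof is correct and takes essentially the same approach as the paper: your case split on whether $x_i$ divides the generator $\mu$ of $\underline{x}^{k+1}$ is precisely the paper's decomposition $\underline{x}^{k+1}J=(\underline{y}^{k+1}+x_i\underline{x}^{k})J$ with $\underline{y}=\langle x_1,\ldots,x_{i-1},x_{i+1},\ldots,x_t\rangle$, and the hypothesis $x_i\notin\supp(J)$ is used in the same way to force the extra factor of $x_i$ to come from the $\underline{x}$-part. Your monomial-level bookkeeping is, if anything, more explicit than the paper's one-line argument.
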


\begin{proof}
Suppose $\alpha\in(\underline{x}^{k+1}J:x_i)$ and so $\alpha x_i\in\underline{x}^{k+1}J$.
Since $\underline{x}^{k+1}J=(\underline{y}^{k+1}+x_i\underline{x}^k)J$ where $\underline{y}=\langle x_1,\ldots,x_{i-1},x_{i+1},\ldots,x_t\rangle$, it therefore follows that $\alpha x_i\in x_i\underline{x}^kJ$ and so $\alpha\in\underline{x}^kJ$.
Hence $(\underline{x}^{k+1}J:x_i)=\underline{x}^kJ$ for all $k\geq 1$, as required.
\end{proof}

\begin{Proposition}\label{P2}
Let $I$ be a monomial ideal of $R$ such that $\deg(I)\leq 2$. If $I=\langle x_1,\ldots,x_t\rangle +J$ such that $J$ is a monomial ideal of $R$ with $\deg(J)\leq 2$ and $x_i\notin\supp(J)$ for all $1\leq i\leq t$, then $(I^{k+1}:I)=I^k$ for all $k\geq 1$ and $1\leq i\leq t$.
\end{Proposition}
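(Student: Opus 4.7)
The plan is to reduce the equality $(I^{k+1}:I) = I^k$ to computing the colon of $I^{k+1}$ by a single variable $x_1$. The inclusion $I^k \subseteq (I^{k+1}:I)$ is immediate from $I\cdot I^k \subseteq I^{k+1}$. For the reverse, since $x_1\in I$ we have $(I^{k+1}:I) \subseteq (I^{k+1}:x_1)$, so it suffices to prove $(I^{k+1}:x_1) \subseteq I^k$.

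Writing $\underline{x} = \langle x_1,\ldots,x_t\rangle$, expand
\[ I^{k+1} = (\underline{x}+J)^{k+1} = \sum_{j=0}^{k+1} \underline{x}^{k+1-j}J^j.\]
For monomial ideals the colon by a monomial distributes over sums (a monomial in a sum of monomial ideals lies in at least one summand), so
\[ (I^{k+1}:x_1) = \sum_{j=0}^{k+1}(\underline{x}^{k+1-j}J^j : x_1).\]
For each $0\leq j\leq k$, Lemma \ref{L2} applies with the monomial ideal $J$ there replaced by $J^j$ here -- legitimate because $x_1\notin\supp(J^j)$ -- giving $(\underline{x}^{k+1-j}J^j:x_1) = \underline{x}^{k-j}J^j$. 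For the top term $j=k+1$, every generator of $J^{k+1}$ is coprime to $x_1$, hence $(J^{k+1}:x_1) = J^{k+1}$. Summing yields
\[ (I^{k+1}:x_1) = \sum_{j=0}^{k}\underline{x}^{k-j}J^j + J^{k+1} = I^k + J^{k+1} = I^k,\]
since $J^{k+1}\subseteq J^k \subseteq I^k$, which closes the argument.

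The only genuine obstacle is the bookkeeping: writing down the binomial-style expansion of $I^{k+1}$ cleanly and verifying that Lemma \ref{L2} applies uniformly to every summand. The hypothesis $x_i\notin\supp(J)$ is precisely what makes this uniform application possible; notably, the degree bound $\deg(J)\leq 2$ does not seem to enter the argument as sketched, suggesting the proposition is actually true in somewhat greater generality than stated.
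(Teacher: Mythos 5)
Your argument is correct and is essentially the paper's own proof: the same expansion $I^{k+1}=\sum_j \underline{x}^{j}J^{k+1-j}$, termwise colon by $x_1$ via Lemma \ref{L2}, absorption of the $J^{k+1}$ term, and the squeeze $I^k\subseteq(I^{k+1}:I)\subseteq(I^{k+1}:x_1)=I^k$. The only differences are cosmetic: you justify distributing the colon over the sum directly where the paper cites \cite[Lemma 2.1]{Ma}, and your (accurate) observation that $\deg(J)\leq 2$ is never used.
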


\begin{proof}
First we prove that $(I^{k+1}:x_i)=I^k$ for all $k\geq 1$ and $1\leq i\leq t$.
 It is clear $I^{k+1}=\sum_{j=0}^{k+1} \underline{x}^jJ^{k+1-j}$, where $\underline{x}=\langle x_1,\ldots,x_t\rangle$.
 It therefore follows $(I^{k+1}:x_i)=J^{k+1}+\sum_{j=1}^{k+1}(\underline{x}^jJ^{k+1-j}:x_i)$. Now by using Lemma \ref{L2} and \cite[Lemma 2.1]{Ma}
 we get $(I^{k+1}:x_i)=\sum_{j=0}^{k}\underline{x}^jJ^{k-j}=I^k$. Since $I^k\subseteq(I^{k+1}:I)\subseteq(I^{k+1}:\underline{x})\subseteq(I^{k+1}:x_i)=I^k$, we have $(I^{k+1}:I)=I^k$ for all $k\geq 1$, as required.

\end{proof}

The following result is a generalization of \cite[Theorem 2.15]{MMV}.
\begin{Theorem}
Let $I$ be a monomial ideal of $R$ such that $\deg(I)\leq 2$. Then $(I^{k+1}:I)=I^k$ for all $k\geq 1$. In particular, $\Ass(I^k)\subseteq\Ass(I^{k+1})$
for all $k\geq 1$.
\end{Theorem}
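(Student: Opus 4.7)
The plan is to reduce the statement to Proposition \ref{P2} via a canonical decomposition of $I$, and to invoke \cite[Theorem 2.15]{MMV} to cover the one edge case where that reduction does not apply. The associated primes conclusion will then drop out of Lemma \ref{L1}.

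I would begin by writing $I$ in terms of its unique minimal monomial generating set. Since $\deg(I)\leq 2$, every minimal generator is either a variable or a squarefree or non-squarefree degree-$2$ monomial. Let $x_1,\dots,x_t$ be the degree-$1$ generators (possibly $t=0$), and let $J$ be the ideal generated by the remaining (degree-$2$) minimal generators, so that $\deg(J)\leq 2$ and $I=\langle x_1,\dots,x_t\rangle+J$. The key observation is minimality: any degree-$2$ generator divisible by some $x_i$ would already lie in $\langle x_1,\dots,x_t\rangle\subseteq I$ and hence be redundant. Consequently $x_i\notin\supp(J)$ for all $1\leq i\leq t$, and the hypotheses of Proposition \ref{P2} are met whenever $t\geq 1$.

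If $t\geq 1$, Proposition \ref{P2} applies directly and gives $(I^{k+1}:I)=I^k$ for every $k\geq 1$. If instead $t=0$, then $I=J$ is a monomial ideal generated in degree exactly $2$, and the equality is precisely \cite[Theorem 2.15]{MMV}, from which the present theorem is advertised as a generalization. Either way, the strong persistence identity holds, and Lemma \ref{L1} then yields $\Ass(I^k)\subseteq\Ass(I^{k+1})$ for all $k\geq 1$.

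The only place where real work would be needed is Proposition \ref{P2} itself, where one must understand $(I^{k+1}:x_i)$ for a variable $x_i$ that is a generator of $I$ but does not appear in the other generators. Since that proposition has already been established, the present theorem is essentially a packaging statement: split the generators by degree, observe that minimality forces disjoint supports between the two pieces, and invoke the appropriate previous result depending on whether or not degree-$1$ generators are present. I do not anticipate any genuine obstacle beyond verifying the disjoint support claim carefully.
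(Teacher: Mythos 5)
Your argument follows the paper's proof essentially step for step: split the minimal generators into the linear part $\langle x_1,\dots,x_t\rangle$ and the degree-two part $J$, note that minimality of the generating set forces $x_i\notin\supp(J)$, apply Proposition \ref{P2} when $t\geq 1$, handle the pure degree-two case by citation, and finish with Lemma \ref{L1}. (You actually spell out the disjoint-support observation more carefully than the paper does.) The one discrepancy is your citation for the base case $t=0$: you invoke \cite[Theorem 2.15]{MMV}, which concerns edge ideals of finite simple graphs, i.e.\ \emph{square-free} quadratic monomial ideals, whereas a monomial ideal generated in degree two may have minimal generators of the form $x_i^2$ (e.g.\ $I=\langle x_1^2,x_2x_3\rangle$), which that theorem does not cover. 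The paper instead cites \cite[Theorem 3]{RT}, which establishes the strong persistence property for arbitrary monomial ideals generated in degree two; replacing your citation by that one closes the gap, and the rest of your argument stands as written.
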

\begin{proof}
If $I$ is a monomial ideal of single degree $2$, then the result follows by \cite[Theorem 3]{RT}. Now, we may assume that $I=\langle x_1,\ldots,x_t\rangle +J$ such that $\deg(J)\leq 2$. Therefore by Proposition \ref{P2} we have $(I^{k+1}:I)=I^k$. The proof then follows by Lemma \ref{L1}.

\end{proof}

{\bf Acknowledgement:} We would like to thank Kamran Divaani-Aazar and Mehrdad Nasernejad for some helpful comments. Also, we would like to thank deeply grateful to the referee for the careful reading of the manuscript.


\begin{thebibliography}{99}

\bibitem{ANR} I. Al-Ayyoub, M. Nasernejad and L. G. Roberts, {\it Normality of cover ideals of graphs and normality under some operations}, Results Math., {\bf74}(2019), No. 140, 26 pp.

\bibitem{BHR} S. Bayati, J. Herzog and G. Rinaldo, {\it On the stable set of associated prime ideals of a monomial ideal}, Arch. Math., {\bf 98}(2012), 213-217.
\bibitem{BFT} E. Bela, G. Favacchio and N. Tran, {\it In the shadows of a hypergraph: looking for associated
primes of powers of square-free monomial ideals}, J. Algebr. Comb., {\bf 53}(2021), 11-29.

\bibitem{B} M. Brodmann, {\it Asymptotic stability of $\Ass(M/{I^nM})$}, Proc. Amer. Math. Soc., {\bf 74}(1979), 16-18.

\bibitem{B1} M. Brodmann, {\it The asymptotic nature of the analytic spread}, Math. Proc. Cambridge Philos. Soc., {\bf 86}(1979), 35-39.

\bibitem{CHHV}E. Carlini, H. T. H\`a, B. Harbourne and  A. Van Tuyl, {\it Ideals of powers and powers
of ideals, intersecting Algebra, Geometry, and Combinatorics}, Lect. Notes of the Unione Matematica Italiana, {\bf 7172}, Springer Nature Switzerland (2020).

\bibitem{FHV} C. A. Francisco, H. T. H\`a and  A. Van Tuyl, {\it Associated primes of monomial ideals and odd holes in graphs}, J. Algebr. Comb., {\bf 32}(2010), 287-301.
\bibitem{FHV1} C. A. Francisco, H. T. H\`a and  A. Van Tuyl, {\it A conjecture on critical graphs and connections to the persistence of associated primes}, Discrete Math., {\bf 310}(2010), 2176-2182.
\bibitem{FHV2} C. A. Francisco, H. T. H\`a nd  A. Van Tuyl, {\it  Colorings of hypergraphs, perfect graphs, and associated primes of powers of monomial ideals}, J. Algebra, {\bf 331}(2011), 224-242.

\bibitem{GS}D. R. Grayson and M. E. Stillman, {\it Macaulay 2, a software system for research in algebraic geometry}, Available at {http://www.math.uiuc.edu/Macaulay2/}.

\bibitem{HH1} J. Herzog and T. Hibi, {\it The depth of powers of an ideal}, J. Algebra, {\bf 291}(2005), 534-550.

\bibitem{HH} J. Herzog and T. Hibi, {\it Monomial ideals}, GTM., {\bf 260}, Springer, Berlin, (2011).
\bibitem{HHT} J. Herzog, T. Hibi and N. V. Trung, {\it Symbolic powers of monomial ideals and vertex cover algebras}, Adv. Math., {\bf 210}(2007), 304-322.
\bibitem{HM} J. Herzog and A. Mafi, {\it Stability properties of powers of ideals in regular local rings of small dimension}, Pacific J. Math., {\bf 295}(2018), 31-41.
\bibitem{HQ} J. Herzog and A. A. Qureshi, {\it Persistence and stability properties of powers of ideals}, J. Pure and Appl. Algebra, {\bf 219}(2015), 530-542.

\bibitem{HRV} J. Herzog, A. Rauf and M. Vladoiu, {\it The stable set of associated prime ideals of a polymatroidal ideal}, J. Algebr. Comb., {\bf 37}(2013), 289-312.
\bibitem{KSS} T. Kaiser, M. Stehlik and R. Skrekovski, {\it Replication in critical graphs and the persistence of monomial ideals}, J. Comb. Theory Ser. A, {\bf 123}(2014), 239-251.

\bibitem{KM} Sh. Karimi and A. Mafi, {\it On stability properties of powers of polymatroidal ideals}, Collect. Math., {\bf 70}(2019), 357-365.

\bibitem{Ma} A. Mafi, {\it Ratliff-Rush ideal and reduction numbers}, Comm. Algebra, {\bf 46}(2018), 1272-1276.

\bibitem{MN} A. Mafi and D. Naderi, {\it A note on stability properties of powers of polymatroidal ideals}, arXiv:2112.05918v1 [math.AC](2021).

\bibitem{MMV} J. Martinez-Bernal, S. Morey and R. H. Villarreal, {\it Associated primes of powers of edge ideals}, Collect. Math., {\bf 63}(2012), 361-374.

\bibitem{M} S. McAdam, {\it Asymptotic prime divisors}, Lect. Notes Math., {\bf 1023}, Springer-Verlag, New York, (1983).

\bibitem{MV} S. Morey and R. H. Villarreal, {\it Edge ideals: Algebraic and Combinatorial properties}, De Gruyter, Berlin, (2012), pp. 85-126.

\bibitem{NKA} M. Nasernejad, K. Khashyarmanesh and I. Al-Ayyoub, {\it Associated primes of powers of cover ideals under graph operations}, Comm. Algebra, {\bf 47}(2019), 1985-1996.
\bibitem{R} L. J. Ratliff, {\it On prime divisors of $I^n$, n large}, Michigan. Math. J., {\bf 23}(1976), 337-352.

\bibitem{RT} E. Reyes and J. Toledo, {\it On the strong persistence property for monomial ideals}, Bull. Math. Soc. Sci. Math. Roumanie, {\bf 80}(2018), 293-305.
\bibitem{SVV} A. Simis, V. V. Vasconcelos and R. H. Villarreal, {\it On the ideal theory of graphs}, J. Algebra, {\bf 167}(1994), 389-416.

\bibitem{V1}  R. H. Villarreal, {\it Cohen-Macaulay graphs}, Manuscripta Math., {\bf 66}(1990), 277-293.

\bibitem{V} R. H. Villarreal, {\it Rees algebras and polyhedral cones of ideals of vertex covers of perfect graphs}, J. Algebr. Comb., {\bf 27}(2008), 293-305.


\end{thebibliography}
\end{document}